\def\R{\mathbb R}
\def\t{\tanh (2\sigma t)}
\def\s{\sinh(2\sigma t)}
\def\ts{\tanh (2\sigma s)}
\def\cs{\cosh(2\sigma s)}
\def\ss{\sinh(2\sigma s)}
\def\tbis{\tan(2\sigma t)}
\def\cbis{\cos(2\sigma t)}
\def\sbis{\sin(2\sigma t)}
\def\tsbis{\tan (2\sigma s)}
\def\csbis{\cos(2\sigma s)}
\def\ssbis{\sin(2\sigma s)}
\theoremstyle{plain}
\newtheorem{theo}{Theorem} [section]
\newtheorem{prop}[theo]{Proposition}
\theoremstyle{remark}
\newtheorem{rem}[theo]{Remark}
\def\({\left(}
\def\){\right)}
\def\<{\left\langle}
\def\>{\right\rangle}
\def\Tend#1#2{\mathop{\longrightarrow}\limits_{#1\rightarrow#2}}
\numberwithin{equation}{section}
\begin{document}

\title[Replicator-mutator equations with quadratic fitness]{Replicator-mutator equations with quadratic fitness}

\author[M. Alfaro]{Matthieu Alfaro}
\address{CNRS \& Univ. Montpellier\\
    IMAG\\ CC~051\\ 34095 Montpellier\\France}
\email{matthieu.alfaro@umontpellier.fr}
\author[R. Carles]{R\'emi Carles}
\email{remi.carles@math.cnrs.fr}

\begin{abstract} 
This work completes our previous analysis on models
arising in evolutionary genetics. We consider the so-called
replicator-mutator equation, when the fitness is quadratic. This
equation is a heat equation with a harmonic potential, plus a
specific nonlocal
term. We give an explicit formula for the
solution, thanks to which we  prove that when the fitness is
non-positive (harmonic potential),  solutions converge
to a universal stationary Gaussian for large time, whereas when the
fitness is non-negative (inverted harmonic potential),  
solutions always become extinct in finite time.  
\end{abstract}

\keywords{Evolutionary genetics, nonlocal reaction diffusion equation,
  explicit solution, long time behaviour, extinction in finite time}
\subjclass[2010]{92D15, 35K15,
  45K05, 35C05}
\maketitle

\section{Introduction}

This  note is concerned with {\it replicator-mutator} equations, that is nonlocal
reaction diffusion problems of the form
\begin{equation*}\label{eq-bio-poids}
\partial _t U={\sigma _0}^2\partial _{xx}U+\mu _0 \left(f(x)-\int _\R
  f(x)U(t,x)\,dx\right)U, \quad t>0,\; x\in \R, 
\end{equation*}
where $\sigma _0>0$ and $\mu _0>0$ are parameters, and when either
$f(x)=-x^{2}$ or $f(x)=x^{2}$. In order to simplify the  presentation
of the results, and before going into more details, we use the
rescaling 
$$
u(t,x):=U\left(\frac t{\mu _0},x\right), \quad
\sigma:=\frac{\sigma _0}{\sqrt \mu _0},
$$
and therefore consider
\begin{equation}\label{eq-bio}
\partial _t u=\sigma ^2 \partial _{xx}u+(f(x)-\overline f (t))u, \quad
t>0,\; x\in \R, 
\end{equation}
where the nonlocal term is given by
\begin{equation}\label{premier-moment}
\overline f (t):=\int _\R f(x)u(t,x)\,dx.
\end{equation}
Equation \eqref{eq-bio} is always supplemented with an initial
condition $u_0\geq 0$ with mass $\int _\R u_0=1$, so that the mass is
\emph{formally} conserved for later times. Indeed, integrating
\eqref{eq-bio} over $x\in \R$, we find that $m(t):=\int_{\R}u(t,x)dx$
satisfies 
\begin{equation*}
  \frac{dm}{dt} = \overline f(t)\(1-m(t)\),\quad m(0)=1,
\end{equation*}
hence $m(t)=1$ so long as $\overline f$ is
integrable. 

In the context of evolutionary genetics, Equation \eqref{eq-bio}
was introduced by Tsimring  et al. \cite{TLK96}, where they
propose a mean-field theory for the evolution of RNA virus
populations on a phenotypic trait space.  In this context, $u(t,x)$ is the
density of a population (at time $t$ and per unit of phenotypic
trait) on a one-dimensional phenotypic trait space, and $f(x)$
represents the fitness of an individual with trait value $x$ in a population 
which is at state $u(t,x)$.
The nonlocal term $\overline{f}(t)$ represents the mean fitness at time $t$. We refer to \cite{Alf-Car} for more  references on the biological background of \eqref{eq-bio}.

\subsection{The  case $f(x)=x$}
This case can be seen as a parabolic counterpart of the Schr\"odinger
equation with a Stark potential; in the context of quantum mechanics,
this potential corresponds to a constant electric field or to
gravity  (see e.g. \cite{Thirring}).
In the case of \eqref{eq-bio}, a family of self similar Gaussian
solutions has been constructed in 
\cite{B14}. Then this case has been completely studied in
\cite{Alf-Car}. It turns out that not only traveling pulses are
changing 
sign, but also extinction in finite time occurs for initial
data with  ``not very light tails'' (data which do not decay very fast
on the right). This, in particular,
contradicts the formal conservation of the mass observed in
\eqref{eq-bio} and evoked above. Roughly speaking, the nonlocal term
of the equation $\int _\R xu(t,x)dx$ becomes infinite and  the
equation becomes meaningless. On the other hand, for initial data with
 ``very light tails'' (they decay sufficiently fast on the right), the
 solution is defined for all times 
$t\geq 0$ and  
is escaping to the right by accelerating and flattening as $t\to
\infty$. More precisely, the long time behaviour is (when $\sigma=1$)
a Gaussian centered at $x(t)=t^2$ (acceleration) and of maximal height
$1/\sqrt{4\pi t}$ (flattening effect). In other words, extinction
occurs at $t=\infty$ in this situation. This is like in the case of the linear
heat equation, up to the fact that the center of the asymptotic
Gaussian is given by $x(t)$, which undergoes some acceleration which is
reminiscent of the effect of gravity.  Notice some links of this
acceleration phenomena 
with some aspects of the so-called {\it dynamics of the fittest
trait} (see \cite{DL96}, \cite{DJMP05}, \cite{MPW12} and the
references therein) which, in some cases, escape to infinity for
large times \cite{D04}, \cite{P07}.

\subsection{The quadratic cases}
To prove the above  results in \cite{Alf-Car}, we used a change
of unknown 
function based on the Avron--Herbst formula for the Schr\"odinger
equation, and  showed that \eqref{eq-bio} is equivalent to the heat 
equation. We could then compute its solution explicitly. Without those
exact computations, the understanding of \eqref{eq-bio} seems far from
obvious, and in particular the role of the decay on the right of the
initial data. In \cite{Alf-Car} we  indicated that similar  computations
could also be performed in the cases $f(x)=\pm x^2$, thanks to the
generalized lens transform of the Schr\"odinger equation, but without
giving any detail. The goal of the present work is to fill this gap,
by giving the full details and results for these two cases. 
\smallbreak

Our motivation  is twofold. First, very recently, replicator-mutator
equations (or related problems) with quadratic fitness have attracted
a lot of attention: let us mention the works \cite{LoMiPe11},
\cite{Cal-Cud-Des-Rao}, \cite{Chi-Lor-Des-Hug}, 
\cite{Mar-Roq}, \cite{Gil-Ham-Mar-Roq}, \cite{Ver}  and the references therein. In
particular, Chisholm et al. \cite{Chi-Lor-Des-Hug} study --- among
other things --- the long time behaviour of the nonlocal term of an
equation very close to \eqref{eq-bio}, in the case $f(x)=-x^2$, with
compactly supported initial data. In Section \ref{s:moinsquadratic} we
completely solve \eqref{eq-bio} for any initial data, and can
therefore study the long time behaviour not only of the nonlocal term
$\overline f(t)$ but also of the full profile $u(t,x)$. The second
reason is  that the obtained behaviours are varied and
interesting, bringing precious information in the dynamical study of
partial differential equations. To give a preview of this, we now state two theorems which are direct consequences of
the more detailed results of Section \ref{s:moinsquadratic} ($f(x)=-x^2$) and Section \ref{s:plusquadratic} ($f(x)=x^2$).

 In the case $f(x)=-x^{2}$, solutions
tend at large time 
to a universal stationary Gaussian. Denote 
\begin{equation*}
  \mathcal M _2(\R):=\left\{g\in L^1(\R),\quad \int_\R x^2 |g(x)|dx<\infty\right\}.
\end{equation*}
\begin{theo}[Case $f(x)=-x^2$]\label{theo:1}
  Let $u_0\ge 0$, with  $\int _\R
  u_0=1$. Then \eqref{eq-bio}, with  initial datum $u_0$, has a unique solution $u\in
  C(\R_+;L^1(\R))\cap L^1_{\rm loc}((0,\infty);\mathcal M _2(\R))$. It  satisfies
\begin{equation*}
\sup _{x\in \R}\; \left |u(t,x)-\varphi(x)\right|\Tend t \infty
0,\quad \text{where }\varphi(x):=\frac
1{\sqrt{2\pi\sigma}}e^{-x^2/(2\sigma)}.
\end{equation*}
\end{theo}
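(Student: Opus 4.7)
My plan is to remove the nonlocal term by a multiplicative self-normalization, reduce to a linear parabolic equation with harmonic potential, solve the latter by a generalized lens transform, and then read off both existence/uniqueness and the long-time convergence of the renormalized profile towards the universal ground state $\varphi$.

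First, I would set
\[
v(t,x):=\exp\left(\int_0^t \overline f(s)\,ds\right)u(t,x),
\]
so that, at least formally, $v$ satisfies the linear equation
\[
\partial_t v=\sigma^2\partial_{xx} v-x^2 v,\qquad v(0,\cdot)=u_0.
\]
Since mass is conserved, $\int_\R u(t,x)\,dx=1$, and so one can invert this relation by
\[
u(t,x)=\frac{v(t,x)}{\displaystyle\int_\R v(t,y)\,dy},
\]
which is legitimate as the linear evolution preserves positivity and $u_0\geq 0$ is not identically zero.

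Second, I would solve the linear heat equation with harmonic potential by a generalized lens transform, roughly of the form
\[
v(t,x)=\frac{1}{\sqrt{\cosh(2\sigma t)}}\,\exp\!\left(-\frac{x^2\,\t}{2\sigma}\right) w\!\left(\frac{\s}{2\sigma},\frac{x}{\cosh(2\sigma t)}\right),
\]
which conjugates $\partial_t-\sigma^2\partial_{xx}+x^2$ to the plain heat operator $\partial_\tau-\sigma^2\partial_{\xi\xi}$, with unchanged initial datum $w(0,\cdot)=u_0$. A convolution with the Gaussian heat kernel then yields a fully explicit Gaussian formula for $v$, hence for $u$ after renormalization. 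For any $t>0$ the resulting formula carries Gaussian decay in $x$, giving $u(t,\cdot)\in \mathcal M_2(\R)$ with the local integrability in time required by the statement, while continuity from $\R_+$ into $L^1(\R)$ follows from the continuity of the heat semigroup together with the smoothness in $t$ of the lens factors.

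Third, letting $t\to+\infty$ in the explicit formula, the prefactor $\exp(-x^2\,\t/(2\sigma))$ converges to $\exp(-x^2/(2\sigma))$ since $\t\to 1$, while the heat variable $\s/(2\sigma)$ blows up and the spatial argument $x/\cosh(2\sigma t)$ tends to $0$, uniformly on compacts in $x$. Hence $w$ is evaluated in the large-time regime of the heat semigroup and converges to its conserved mass $\int_\R u_0=1$, irrespective of the shape of $u_0$. Applying the same asymptotics to $\int_\R v(t,y)\,dy$ extracts the normalizing constant $\sqrt{2\pi\sigma}/\sqrt{\cosh(2\sigma t)}$ in the limit, and the ratio converges uniformly in $x$ to $\varphi$. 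Uniqueness within the stated class then follows from uniqueness for the linear Cauchy problem satisfied by $v$, combined with a Grönwall-type estimate for the nonlocal factor $\exp(\int_0^t \overline f)$.

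The delicate point I expect is not the convergence itself but the justification of the self-normalization when $u_0$ lies only in $L^1$ and may fail to have a finite second moment: in that case $\overline f(t)$ is a priori defined only for $t>0$. The instantaneous Gaussian regularization produced by the lens transform should however give finiteness of all moments as soon as $t>0$, together with local integrability of $\overline f$ on $(0,\infty)$, closing the argument and matching the functional framework stated in the theorem.
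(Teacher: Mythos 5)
Your overall strategy (gauge away the nonlocal term, reduce to the harmonic-potential heat equation, then to the free heat equation via a lens transform, and read off the asymptotics from an explicit formula) is exactly the paper's, and your normalization $u=v/\int_\R v$ is a legitimate, even slightly cleaner, variant of the paper's $u=v/\bigl(1-\int_0^t\int_\R x^2v\bigr)$: the two coincide because $\int_\R v(t,\cdot)\,dx=e^{\int_0^t\overline f}$. But there is a genuine error at the key step: the time reparametrization in your lens transform is wrong. Writing $v(t,x)=\cosh(2\sigma t)^{-1/2}e^{-x^2\tanh(2\sigma t)/(2\sigma)}\,w\bigl(\tau(t),x/\cosh(2\sigma t)\bigr)$ and demanding that $w$ solve $\partial_\tau w=\sigma^2\partial_{\xi\xi}w$ forces $\tau'(t)=1/\cosh^2(2\sigma t)$, i.e. $\tau(t)=\tanh(2\sigma t)/(2\sigma)$, not $\sinh(2\sigma t)/(2\sigma)$; with your choice the residual equation is $\partial_\tau w=\bigl(\sigma^2/\cosh^3(2\sigma t)\bigr)\partial_{\xi\xi}w$, which is not the heat equation.

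This cannot be absorbed into ``roughly of the form'', because your entire long-time argument rests on the claim that the heat time blows up, so that $w$ enters the self-similar regime and ``converges to its conserved mass, irrespective of the shape of $u_0$''. With the correct transform the heat time saturates at the finite value $1/(2\sigma)$: the free heat flow acts only for a bounded duration, $w(\tau(t),\cdot)$ converges to $w(1/(2\sigma),\cdot)$, and this profile depends on $u_0$ in an essential way (for instance the relevant limit $w(1/(2\sigma),0)$ is proportional to $\int_\R e^{-y^2/(2\sigma)}u_0(y)\,dy$, not to $\|u_0\|_{L^1}$). The universality of the limit $\varphi$ therefore does not come from the heat semigroup forgetting $u_0$; it comes from the fact that this $u_0$-dependent constant appears both in the numerator $v(t,x)$ and in the normalization $\int_\R v(t,y)\,dy$ and cancels in the ratio --- this is exactly what the paper's formula \eqref{formula-2} exhibits, and what the proof of Theorem~\ref{th:long-time} exploits via a mean-value estimate comparing $u(t,\cdot)$ with the profile generated by $u_0=\delta_0$, which yields the rate $C/\sinh(2\sigma t)$. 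To repair your proof, replace $\sinh$ by $\tanh$ in the new time variable, compute the explicit quotient, and deduce the convergence from the resulting formula rather than from the $t\to\infty$ asymptotics of the free heat flow. Finally, the uniqueness claim needs more than a ``Gr\"onwall-type estimate'': in the paper it follows from the invertibility of the two changes of unknown precisely within the class $C(\R_+;L^1(\R))\cap L^1_{\rm loc}((0,\infty);\mathcal M _2(\R))$.
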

The above result shows that the presence of the quadratic fitness
compensates the diffusive mechanism of the heat equation, since we
recall that for $u_0\ge 0$ with $\int (1+|x|)u_0(x)dx<\infty$,
\begin{equation*}
  e^{t\partial_{xx}}u_0(x) =\frac{\|u_0\|_{L^1(\R)}}{\sqrt{4\pi
      t}}e^{-x^2/(4t)}+o(1),\quad \text{in }L^\infty(\R), \text{ as }t\to \infty.
\end{equation*}
We also emphasize that in Theorem~\ref{theo:1}, we do not assume that
$u_0$ has two momenta in $L^1(\R)$, $u_0\in \mathcal M _2(\R)$: this property
is satisfied by the solution instantaneously, as we will see in
Section~\ref{s:moinsquadratic}. 
  
On the other hand, in  the case
$f(x)=x^2$, extinction in finite time always occurs.
\begin{theo}[Case $f(x)=x^2$]\label{theo:2}
  Let $u_0\ge 0$, with  $\int _\R u_0=1$. Then the solution to
  \eqref{eq-bio}, with   initial datum $u_0$, 
  becomes extinct in finite time:
  \begin{equation*}
    \exists T\in \left[0,\frac{\pi}{4\sigma}\right],\quad
    u(t,x) =0, \quad 
    \forall t>T,\ \forall x\in \R. 
  \end{equation*}
\end{theo}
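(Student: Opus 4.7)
The strategy mirrors the $f(x)=-x^2$ case: one removes the nonlocal term by a mass renormalization, solves the resulting \emph{linear} equation via a generalized (inverted) lens transform, and locates the time at which the normalization blows up. Precisely, if $u\ge 0$ solves \eqref{eq-bio} with $\int_\R u_0=1$ on some interval $[0,T)$, then $v(t,x):=\exp\!\bigl(\int_0^t\overline f(s)\,ds\bigr)u(t,x)$ satisfies the linear Cauchy problem
\[
\partial_t v=\sigma^2\partial_{xx}v+x^2 v,\qquad v(0,\cdot)=u_0,
\]
and the scaling factor coincides with the total mass $I(t):=\int_\R v(t,x)\,dx$. Hence $u=v/I$ and $\overline f=(\log I)'$ on any interval where $I$ is finite, so the problem reduces to analyzing $I(t)$ for an explicitly solvable linear flow.

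To solve that linear equation, I would apply the inverted lens transform
\[
v(t,x)=\frac{1}{\sqrt{\cbis}}\exp\!\left(\frac{\tbis}{2\sigma}\,x^2\right)w\!\left(\frac{\tbis}{2\sigma},\,\frac{x}{\cbis}\right),\qquad t\in\bigl[0,\tfrac{\pi}{4\sigma}\bigr),
\]
which—by a chain-rule check entirely analogous to the $-x^2$ case—reduces $\partial_t v=\sigma^2\partial_{xx}v+x^2v$ to the free heat equation $\partial_\tau w=\sigma^2\partial_{yy}w$ with $w(0,\cdot)=u_0$. Inserting the heat-semigroup representation of $w$ into $I(t)$, performing the change of variables $y=x/\cbis$, and exchanging the order of integration via Fubini, the Gaussian integral in $y$ collapses thanks to the algebraic identity $1-\sin^2(2\sigma t)=\cos^2(2\sigma t)$, producing the clean closed form
\[
I(t)=\frac{1}{\sqrt{\cbis}}\int_\R u_0(z)\exp\!\left(\frac{\tbis}{2\sigma}\,z^2\right)dz,\qquad t\in\bigl[0,\tfrac{\pi}{4\sigma}\bigr).
\]

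Finite-time extinction is then immediate. Since $u_0\ge 0$ with $\int u_0=1$, the integrand above is pointwise non-decreasing in $t$ (as $\tbis$ is), while the prefactor $\cbis^{-1/2}$ alone diverges as $t\to(\pi/(4\sigma))^-$. Monotone convergence therefore furnishes a time
\[
T:=\inf\bigl\{t\in\bigl[0,\tfrac{\pi}{4\sigma}\bigr]:I(t)=+\infty\bigr\}\in\bigl[0,\tfrac{\pi}{4\sigma}\bigr],
\]
with $I(t)\to+\infty$ as $t\to T^-$. The representation $u=v/I$ cannot be continued past $T$ as a mass-one $L^1$ solution, so one extends by $u\equiv 0$ on $(T,\infty)$—the natural weak extension, which trivially solves \eqref{eq-bio} there—yielding the announced extinction.

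The \textbf{main obstacle} is the Fubini/Gaussian computation leading to the explicit formula for $I(t)$: one must identify the right change of variables and order of integration to trigger the cancellation $1-\sin^2(2\sigma t)=\cos^2(2\sigma t)$, collapsing a two-parameter Gaussian integral into the single-integral expression depending on $u_0$ only through the Gaussian weight $\exp(\tbis\,z^2/(2\sigma))$. Once this formula is in hand, the chain-rule verification of the lens transform and the monotone-convergence argument for extinction are essentially routine.
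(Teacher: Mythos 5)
Your argument is correct and follows essentially the same route as the paper: reduce to the linear problem $\partial_t v=\sigma^2\partial_{xx}v+x^2v$ via the exponential renormalization, apply the (inverted) lens transform to the heat equation, which is valid only up to $T^{\rm Heat}=\pi/(4\sigma)$, and identify the extinction time $T$ as the first time the weighted integral $\int_\R e^{\frac{\tan(2\sigma t)}{2\sigma}z^2}u_0(z)\,dz$ becomes infinite, exactly as in \eqref{def:T}. Your computation of the normalization as the total mass $\int_\R v(t,x)\,dx$ (rather than the paper's direct evaluation of $1+\int_0^t\int_\R x^2v$, which it integrates in $s$ as an exact derivative) is a mild streamlining that produces the same closed form, and the only loose end --- the claim that $I(t)\to+\infty$ as $t\to T^-$, which can fail in borderline cases where $I(T)<\infty$ but $I(t)=\infty$ for $t>T$ --- is harmless, since extinction for $t>T$ follows from $I(t)=+\infty$ there while $v$ stays finite.
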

As in \cite{Alf-Car}, the solution may become extinct
\emph{instantaneously} ($T=0$), that is, \eqref{eq-bio} has no
non-trivial solution, if the initial datum has too little decay at 
infinity.

\subsection{Heat vs. Schr\"odinger}
As mentioned above, the present results, as well as those established
in \cite{Alf-Car}, stem from explicit formulas discovered in the
context of Schr\"odinger equations; see
\cite{Niederer73,Niederer74}, \cite{Thirring}.  However, we have to emphasize
several differences between \eqref{eq-bio} and its Schr\"odinger
analogue,
\begin{equation}\label{eq-schrod}
i\partial _t u=\sigma ^2 \partial _{xx}u+(f(x)-\overline f (t))u, \quad
t\in \R,\; x\in \R.
\end{equation}
The Schr\"odinger equation is of course time reversible. Less obvious is the
way the term $\overline f(t) u$ is handled, according to the
equation one considers. In the Schr\"odinger case \eqref{eq-schrod},
we simply use a gauge transform to get rid of this term: it is
equivalent to consider $u$ solution to \eqref{eq-schrod} or
\begin{equation}\label{eq:gauge}
  v(t,x) = u(t,x) e^{-i\int_0^t\overline f(s)ds},
\end{equation}
which solves
\begin{equation*}
  i\partial _tv =\sigma ^2 \partial _{xx}v+f(x)v,
\end{equation*}
with the same initial datum. If we assume, like in the case of
\eqref{eq-bio}, that $\overline f(t)$ is 
real (which means that it is 
\emph{not} given by \eqref{premier-moment} in this case), the change
of unknown function 
\eqref{eq:gauge} does not alter the dynamics, since $|
v(t,x)|=|u(t,x)|$. On the other hand, the analogous transformation in
the parabolic case has a true effect on the dynamics since, as pointed
out in \cite{Alf-Car}, it becomes
\begin{equation}\label{eq:uv-para}
  v(t,x) = u(t,x) e^{\int_0^t \overline f (s)ds}.
\end{equation}
Multiplying by $f(x)$ and integrating in $x$, we infer
\begin{equation*}
  \int_{\R} f(x)v(t,x)dx = \overline f(t)e^{\int_0^t \overline f
    (s)ds}=\frac{d}{dt}\(e^{\int_0^t \overline f (s)ds} \). 
\end{equation*}
Therefore, \emph{so long as} $\int_0^t \int_{\R} f(x)v(s,x)dxds>-1$, 
\begin{equation*}
  u(t,x) = \frac{v(t,x)}{1+\int_0^t \int_{\R} f(x)v(s,x)dxds}.
\end{equation*}
It is clear that in general, $u$ and $v$ now have different large time
behaviours. 

The last algebraic step to construct explicit solutions for
\eqref{eq-bio} and \eqref{eq-schrod} consists in using the
Avron--Herbst formula when $f(x)=x$, or a (generalized) lens transform
when $f(x)=\pm x^2$. In the case of the standard (quantum) harmonic
oscillator, the solutions to
\begin{equation*}
  i\partial_t v +\partial_{xx} v =\omega^2 x^2v,\quad \text{and}\quad
  i\partial_t w +\partial_{xx} w=0,\quad \text{with }v_{\mid
    t=0}=w_{\mid t=0},
\end{equation*}
are related through the formula
\begin{equation*}
v(t,x) = \frac{1}{\sqrt{\cos (2\omega t)}} w\(\frac{\tan (2\omega
  t)}{2\omega},\frac{x}{\cos (2\omega t)}\)e^{-i\frac{\omega}{2} x^2\tan
  (2\omega t)},\quad |t|<\frac{\pi}{4\omega}.
\end{equation*}
What this formula does not show, since it is limited in time, is that
the solution $v$ is periodic in time, as can be seen for instance by
considering an eigenbasis for the harmonic oscillator
 $ -\partial_{xx}+\omega^2x^2$,
given by Hermite functions. Suppose $\omega=1$ to lighten the
notations: the Hermite functions $(\psi_j)_{j\ge 0}$ form an
orthogonal basis of $L^2(\R)$, 
and 
\begin{equation*}
  -\partial_{xx}\psi_j +x^2\psi_j = (2j+1)\psi_j. 
\end{equation*}
Therefore, if 
\begin{equation*}
  v(0,x) = \sum_{j\ge 0} \alpha_j\psi_j(x),\quad \text{then }v(t,x) =
  \sum_{j\ge 0} \alpha_j\psi_j(x) e^{i(2j+1)t}
\end{equation*}
is obviously $2\pi$-periodic in time. This is in sharp contrast with
the behaviour described in Theorem~\ref{theo:1}. Similarly, the
solutions to 
\begin{equation*}
  i\partial_t v +\partial_{xx} v =-\omega^2 x^2v,\quad
  i\partial_t w +\partial_{xx} w=0,\quad v_{\mid
    t=0}=w_{\mid t=0},
\end{equation*}
are related through the formula (change $\omega$ to $i\omega$ in the
previous formula)
\begin{equation*}
v(t,x) = \frac{1}{\sqrt{\cosh (2\omega t)}} w\(\frac{\tanh (2\omega
  t)}{2\omega},\frac{x}{\cosh (2\omega t)}\)e^{i\frac{\omega}{2} x^2\tanh
  (2\omega t)},\quad t\in \R.
\end{equation*}
This shows that the inverted harmonic potential accelerates the
dispersion ($\|v(t,\cdot)\|_{L^\infty}$ goes to zero exponentially
fast), and the large time profile is given by $w_{\mid t=
  1/(2\omega)}$. Again, this behaviour is completely different from
the one stated in Theorem~\ref{theo:2}. 
\section{The case $f(x)=-x^2$: convergence to a universal
  Gaussian}\label{s:moinsquadratic}

The case $f(x)=-x^2$ can be handled as explained in \cite{Alf-Car}. We
give more details here, and analyze the consequences of the explicit
formula. In particular, for any initial data, the solution is defined for all positive times and converge, at large
time, to a universal stationary Gaussian.

\subsection{Results}

\begin{theo}[The solution explicitly]
\label{th:explicit-sol} Let $u_0\ge 0$, with  $\int _\R u_0=1$. Then
\eqref{eq-bio} with initial datum $u_0$ has a unique solution $u\in
C(\R_+;L^1(\R))\cap L^1_{\rm loc}((0,\infty);\mathcal M _2(\R))$. For all
$t>0$ and $x\in \R$, it is given by
\begin{align}
 u(t,x)&=\frac 1{\sqrt{2\pi\sigma \t}}
 \frac{e^{-\frac \t {2\sigma} x^2}
\displaystyle\int _\R e^{-\frac 1{2\sigma\t} \left(\frac x \cosh(2\sigma t)
-y\right)^2} u_0(y)\,dy} {\displaystyle \int _\R
e^{-\frac \t {2\sigma} y^2}u_0(y)\,dy}\label{formula}\\
&=  \frac 1{\sqrt{2\pi\sigma \t}}
 \frac{
\displaystyle\int _\R e^{-\frac 1{2\sigma \t} \left(x - \frac y
\cosh(2\sigma t)\right)^2} e^{-\frac \t {2\sigma} y^2}u_0(y)\,dy} {\displaystyle
\int _\R e^{-\frac \t {2\sigma} y^2}u_0(y)\,dy}.\label{formula-2}
\end{align}
As a consequence, for all
  $t> 0$, $\overline f(t)$ is given by
\begin{equation}\label{u-bar}
\overline  f(t)=\sigma \t+\frac{1}{(\cosh(2\sigma t))^{2}}\frac{\displaystyle \int _\R
e^{-\frac{\t}{2\sigma}y^{2}} y^{2}\, u_0(y)\,dy}{\displaystyle \int
_\R e^{-\frac{\t}{2\sigma}y^{2}} u_0(y)\,dy}. 
\end{equation}
\end{theo}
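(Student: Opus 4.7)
The plan is to follow the two-step reduction hinted at in the introduction: first remove the nonlocal term by the exponential transformation~\eqref{eq:uv-para}, then kill the harmonic potential by a parabolic lens transform, solve the resulting standard heat equation, and finally unwind both changes of unknown. Setting $v(t,x)=u(t,x)\exp\!\bigl(\int_0^t\overline{f}(s)\,ds\bigr)$, direct differentiation shows that $v$ solves the \emph{linear} Cauchy problem
\begin{equation*}
\partial_{t}v=\sigma^{2}\partial_{xx}v-x^{2}v,\qquad v(0,\cdot)=u_{0},
\end{equation*}
and the reconstruction $u=v/(1+\int_0^t\!\!\int_{\R}f\,v\,dx\,ds)$ is valid so long as the denominator stays positive. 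Integrating this reconstruction in $x$ against the constraint $\int_{\R}u=1$ identifies the denominator with $\int_{\R}v(t,y)\,dy$, so $u(t,\cdot)$ is nothing but the $L^{1}$-normalization of the nonnegative function $v(t,\cdot)$; this disposes of the positivity question at once.

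The second step is the parabolic analogue of the Niederer/Mehler lens transform. I would posit the ansatz
\begin{equation*}
v(t,x)=\frac{1}{\sqrt{\cosh(2\sigma t)}}\exp\!\left(-\frac{\t}{2\sigma}x^{2}\right)w\!\left(\frac{\t}{2\sigma},\frac{x}{\cosh(2\sigma t)}\right)
\end{equation*}
and substitute into the linear equation. Matching the coefficients of $w_{\xi\xi}$, $xw_{\xi}$, $x^{2}w$ and $w$ yields three ODEs --- a Riccati-type equation $b^{\prime}=1-4\sigma^{2}b^{2}$ for the phase coefficient, a linked equation $a^{\prime}=4\sigma^{2}ab$ for the rescaling $a$, and the reparameterization $\tau^{\prime}=1/a^{2}$ --- all simultaneously solved by $b(t)=\tau(t)=\tanh(2\sigma t)/(2\sigma)$ and $a(t)=\cosh(2\sigma t)$. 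The transformed equation is the standard heat equation $\partial_{\tau}w=\sigma^{2}\partial_{\xi\xi}w$ with $w(0,\cdot)=u_{0}$, which is solved by the Gaussian kernel. Plugging $\tau=\tanh(2\sigma t)/(2\sigma)$ and $\xi=x/\cosh(2\sigma t)$ back in, and using $\sqrt{4\pi\sigma^{2}\tau}=\sqrt{2\pi\sigma\tanh(2\sigma t)}$, yields a closed form for $v(t,x)$; division by $\int_{\R}v(t,y)\,dy$ gives \eqref{formula}. The equivalent form \eqref{formula-2} follows by expanding the Gaussian inside the numerator, absorbing the $e^{-\t x^{2}/(2\sigma)}$ prefactor, and completing the square in $y$, the crucial algebraic identity being $\cosh^{2}(2\sigma t)-\sinh^{2}(2\sigma t)=1$.

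Formula~\eqref{u-bar} is then a direct second-moment computation: \eqref{formula-2} exhibits $u(t,\cdot)$ as a Gaussian mixture with common variance $\sigma\tanh(2\sigma t)$ and centers $y/\cosh(2\sigma t)$, where $y$ is drawn from the probability measure proportional to $e^{-\t y^{2}/(2\sigma)}u_{0}(y)\,dy$. The variance-plus-mean-squared identity for Gaussians applied inside the mixture then expresses $\int_{\R}x^{2}u(t,x)\,dx$ as $\sigma\tanh(2\sigma t)$ plus $(\cosh(2\sigma t))^{-2}$ times the second moment of that tilted measure, which rearranges into \eqref{u-bar}. Regularity and uniqueness come out as by-products: the Gaussian weight $e^{-\t y^{2}/(2\sigma)}$ provides two momenta instantaneously for every $t>0$, so $u\in L^{1}_{\mathrm{loc}}((0,\infty);\mathcal{M}_{2}(\R))$; continuity at $t=0$ in $L^{1}$ is the standard approximate-identity property of the heat kernel applied to $w$; and uniqueness in the stated class is inherited from the linear equation for $v$ through the bijection $u\leftrightarrow v$.

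The main technical obstacle is the algebraic verification of the lens transform itself: the derivatives of the Gaussian-weighted ansatz generate many cross terms, and one must simultaneously balance all three matching conditions. A subsidiary concern --- that the denominator in \eqref{formula} not vanish --- is disposed of by the strict positivity of the heat-equation solution $w$ emanating from a nontrivial nonnegative $u_{0}$, which propagates to $v$ and hence to $\int_{\R}v(t,y)\,dy$.
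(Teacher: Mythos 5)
Your proposal is correct and follows essentially the same route as the paper: the exponential change of unknown \eqref{eq:uv-para} reducing to the linear equation for $v$, then the parabolic lens transform to the heat equation, the heat-kernel representation, and the hyperbolic identity $\cosh^2-\sinh^2=1$ to pass from \eqref{formula} to \eqref{formula-2}; your matching system $b'=1-4\sigma^2b^2$, $a'=4\sigma^2ab$, $\tau'=1/a^2$ is exactly what the substitution requires and is solved by the stated functions, and your Gaussian-mixture second-moment computation of \eqref{u-bar} is the same calculation the paper performs via Fubini. The one step you handle genuinely differently is the normalizing denominator: the paper evaluates the triple integral $I(t)=\int_0^t\int_\R\int_\R x^2(\cdots)\,dy\,dx\,ds$ explicitly (Fubini, completion of the square in $x$, recognition of a total $s$-derivative), whereas you identify $1+\int_0^t\int_\R f\,v\,dx\,ds$ with $\int_\R v(t,y)\,dy$, leaving a single Gaussian integral whose value $(\cosh(2\sigma t))^{-1/2}\int_\R e^{-\tanh(2\sigma t)\,y^2/(2\sigma)}u_0(y)\,dy$ agrees with the paper's $1-I(t)$. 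This is a clean shortcut; to avoid any circularity with the only \emph{formal} conservation of mass, justify it by observing that both quantities equal $1$ at $t=0$ and have the same derivative $\int_\R f(x)v(t,x)\,dx=\frac{d}{dt}\int_\R v(t,x)\,dx$, the boundary terms vanishing thanks to the Gaussian decay of $v$.
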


%\begin{cor}[The nonlocal term explicitly]\label{cor:u-bar} Under the
 % assumptions of Theorem~\ref{th:explicit-sol}, for all
 % $t> 0$, $\overline f(t)$ is given by
%\begin{equation}\label{u-bar}
%\overline  f(t)=\sigma \t+\frac{1}{(\cosh(2\sigma t))^{2}}\frac{\displaystyle \int _\R
%e^{-\frac{\t}{2\sigma}y^{2}} y^{2}\, u_0(y)\,dy}{\displaystyle \int
%_\R e^{-\frac{\t}{2\sigma}y^{2}} u_0(y)\,dy}. 
%\end{equation}
%\end{cor}

We now investigate the propagation of  Gaussian initial data.

\begin{prop}[Propagation of Gaussian initial
data]\label{prop:prop-gauss} If
\begin{equation}\label{initial-gauss}
u_0(x)=\sqrt{\frac a{2\pi}}e^{-\frac a 2(x-m)^2},\quad a>0, \quad
m\in \R,
\end{equation}
then the solution of \eqref{eq-bio} remains  Gaussian for $t>0$
and is given by
\begin{equation}\label{sol-gauss-cond-ini}
u(t,x)=\sqrt{\frac {a(t)}{2\pi}}e^{-\frac{a(t)}2(x-m(t))^2},
\end{equation} where
\begin{equation}\label{sol-gauss-cond-ini2}
a(t):=\frac {a\sigma+\t}{\sigma(1+a\sigma\t)}, \quad
m(t):=\frac{ma\sigma}{a\sigma\cosh(2\sigma t)+\s}.
\end{equation}
\end{prop}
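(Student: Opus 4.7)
The plan is to feed the Gaussian initial datum \eqref{initial-gauss} directly into the explicit formula \eqref{formula-2} of Theorem~\ref{th:explicit-sol}. Both the numerator and the denominator then become Gaussian integrals in $y$, so closure of the Gaussian class under \eqref{formula-2} is automatic; the real work is to extract $a(t)$ and $m(t)$ from the resulting expression.

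Concretely, the denominator
$$D(t):=\sqrt{\frac{a}{2\pi}}\int_\R \exp\!\left(-\frac{\t}{2\sigma}y^2 - \frac{a}{2}(y-m)^2\right)dy$$
has quadratic coefficient in $y$ equal to $(a\sigma+\t)/(2\sigma)$ and is computed in closed form by completing the square. For the numerator
$$N(t,x):=\sqrt{\frac{a}{2\pi}}\int_\R \exp\!\left(-\frac{1}{2\sigma\t}\Bigl(x - \frac{y}{\cosh(2\sigma t)}\Bigr)^{\!2} - \frac{\t}{2\sigma}y^2 - \frac{a}{2}(y-m)^2\right)dy,$$
the coefficient of $y^2$ in the exponent is $\tfrac{1}{2\sigma\t\cosh^2(2\sigma t)} + \tfrac{\t}{2\sigma} + \tfrac{a}{2}$, which simplifies using $\cosh^2(2\sigma t) - \sinh^2(2\sigma t)=1$ and $\t\cosh(2\sigma t)=\s$ to $(1+a\sigma\t)/(2\sigma\t)$. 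Completing the square in $y$, integrating, and dividing by $\sqrt{2\pi\sigma\t}\,D(t)$ then yields a Gaussian in $x$.

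The concluding step is to match the resulting exponent against $-\tfrac12 a(t)(x-m(t))^2$, reading off $a(t)$ from the coefficient of $x^2$ and $a(t)m(t)$ from the coefficient of $x$. The main obstacle, and really the only nontrivial piece of algebra, is the identity
$$(1+a\sigma\t)\cosh^2(2\sigma t) - 1 = \s\bigl(\s + a\sigma\cosh(2\sigma t)\bigr),$$
which factors the quadratic coefficient in $x$ and produces the claimed form $a(t)=(a\sigma+\t)/(\sigma(1+a\sigma\t))$; the expression for $m(t)$ then drops out by direct division. Mass conservation $\int u(t,\cdot)=1$ forces the overall prefactor to collapse to $\sqrt{a(t)/(2\pi)}$, providing a useful consistency check, and the initial values $\t(0)=\s(0)=0$, $\cosh(0)=1$ confirm $a(0)=a$, $m(0)=m$.
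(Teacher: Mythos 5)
Your proposal is correct and follows essentially the same route as the paper: substitute the Gaussian datum into the explicit formula of Theorem~\ref{th:explicit-sol}, evaluate the resulting Gaussian integrals by completing the square, and read off $a(t)$ and $m(t)$ from the exponent (your hyperbolic identity and the resulting coefficients check out). The only cosmetic differences are that you start from \eqref{formula-2} rather than \eqref{formula}, and you invoke mass conservation (which is immediate from \eqref{formula-2} by Fubini) to fix the prefactor instead of computing it directly; neither changes the substance of the argument.
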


Since $a(t)\to \frac 1 \sigma$ and $m(t)\to 0$ as $t\to \infty$,
it is easily seen that $u(t,x)\to \varphi(x):=\frac 1{\sqrt{2\pi
\sigma }}e^{-\frac 1{2\sigma}x^2}$ uniformly in $x\in \R$.
This fact is actually true for {\it all} initial data, as stated
in the following theorem, which implies Theorem~\ref{theo:1}.

\begin{theo}[Long time behaviour]\label{th:long-time} Under the
  assumptions of Theorem~\ref{th:explicit-sol},  there exists $C>0$
  independent of time  such that 
\begin{equation}\label{deviation}
\sup _{x\in \R}\; \left |u(t,x)-\psi(t,x)\right|\le \frac C
\s,\quad \forall t\ge 1,
\end{equation}
where
$$
\psi(t,x):= \frac 1{\sqrt{2\pi\sigma\t}}e^{-\frac 1{2\sigma
\t}x^2}$$
satisfies 
$$\psi(t,x)\Tend t \infty \varphi(x)=\frac
1{\sqrt{2\pi\sigma}}e^{-\frac1{2\sigma}x^2},
$$
uniformly in $x\in\R$.
\end{theo}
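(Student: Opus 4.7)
The plan is to deduce \eqref{deviation} directly from the explicit formula \eqref{formula-2} of Theorem~\ref{th:explicit-sol} and to treat the convergence $\psi(t,\cdot)\to\varphi$ separately by a simple scaling argument.

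I would begin by writing $u(t,x)-\psi(t,x)$ as a single integral. Introducing
\begin{equation*}
G(t,x,y):=\exp\!\left(-\frac{(x-y/\cosh(2\sigma t))^{2}}{2\sigma\t}\right),
\end{equation*}
formula \eqref{formula-2} expresses $u(t,x)$ as $(2\pi\sigma\t)^{-1/2}$ times a weighted average of $G(t,x,y)$ against the probability measure $e^{-\t y^{2}/(2\sigma)}u_{0}(y)dy\big/\int e^{-\t y^{2}/(2\sigma)}u_{0}(y)dy$. Since $G(t,x,0)=e^{-x^{2}/(2\sigma\t)}$ does not depend on $y$, the function $\psi(t,x)=(2\pi\sigma\t)^{-1/2}G(t,x,0)$ equals the same average with $G(t,x,y)$ replaced by $G(t,x,0)$, whence
\begin{equation*}
u(t,x)-\psi(t,x)=\frac{1}{\sqrt{2\pi\sigma\t}}\cdot\frac{\displaystyle\int_{\R}\bigl[G(t,x,y)-G(t,x,0)\bigr]e^{-\t y^{2}/(2\sigma)}u_{0}(y)\,dy}{\displaystyle\int_{\R}e^{-\t y^{2}/(2\sigma)}u_{0}(y)\,dy}.
\end{equation*}

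Next, I would establish the pointwise but \emph{$x$-uniform} estimate $|G(t,x,y)-G(t,x,0)|\le C|y|/\s$, by differentiating along the segment $s\in[0,1]$,
\begin{equation*}
\partial_{s}G(t,x,sy)=\frac{y\,(x-sy/\cosh(2\sigma t))}{\sigma\,\s}\,G(t,x,sy),
\end{equation*}
and invoking the elementary bound $\sup_{\xi\in\R}|\xi|\,e^{-\xi^{2}/(2a)}=\sqrt{a/e}$ with $a=\sigma\t\le\sigma$. Inserting this into the quotient above, it remains to bound the two integrals. For $t\ge 1$ one has $\tanh(2\sigma)\le\t\le 1$, so the weight $e^{-\t y^{2}/(2\sigma)}$ is sandwiched between $e^{-y^{2}/(2\sigma)}$ and $e^{-\tanh(2\sigma)y^{2}/(2\sigma)}$. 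The numerator is then majorized by $\sup_{y\in\R}|y|\,e^{-\tanh(2\sigma)y^{2}/(2\sigma)}\cdot\|u_{0}\|_{L^{1}(\R)}$, while the denominator is minorized by the positive constant $c_{0}:=\int_{\R}e^{-y^{2}/(2\sigma)}u_{0}(y)\,dy>0$. Combining these with $1/\sqrt{\t}\le 1/\sqrt{\tanh(2\sigma)}$ for $t\ge 1$ gives \eqref{deviation}.

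For the uniform convergence $\psi(t,\cdot)\to\varphi$, I would set $\lambda:=\t\to 1$ and $f_{\lambda}(x):=(2\pi\sigma\lambda)^{-1/2}e^{-x^{2}/(2\sigma\lambda)}$, so that $\psi=f_{\lambda}$ and $\varphi=f_{1}$. A short computation gives $\partial_{\lambda}f_{\lambda}=(2\lambda)^{-1}(x^{2}/(\sigma\lambda)-1)f_{\lambda}$, and since $x^{2}e^{-x^{2}/(2\sigma\lambda)}$ is bounded uniformly in $x$ for $\lambda\in[1/2,1]$, one obtains $\sup_{x\in\R}|\partial_{\lambda}f_{\lambda}(x)|\le C$ on that interval. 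The mean value theorem then yields $\sup_{x\in\R}|\psi(t,x)-\varphi(x)|\le C|1-\t|$, which in fact decays exponentially. The main technical obstacle is precisely the uniformity in $x\in\R$ in the estimate for $G(t,x,y)-G(t,x,0)$: a naive first-order comparison produces factors of $|x|$ that blow up, but they are exactly absorbed by the factor $(x-sy/\cosh(2\sigma t))$ in $\partial_{s}G$ via the universal bound $|\xi|e^{-\xi^{2}/(2a)}\le\sqrt{a/e}$; everything else is routine.
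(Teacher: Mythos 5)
Your proposal is correct and follows essentially the same route as the paper: the same decomposition of $u-\psi$ as a ratio of integrals with the difference of Gaussians in the numerator, the same mean-value-theorem estimate yielding the $x$-uniform bound $C|y|/\sinh(2\sigma t)$ (the paper applies it to $G(z)=e^{-z^2/(2\sigma)}$ at the two rescaled points, you integrate $\partial_s G(t,x,sy)$ along the segment — the uniform bound $|\xi|e^{-\xi^2/(2a)}\le\sqrt{a/e}$ is exactly $\|G'\|_\infty<\infty$), and the same sandwiching of $\tanh(2\sigma t)$ between $\tanh(2\sigma)$ and $1$ for $t\ge 1$ to control the remaining ratio of integrals. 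Your explicit argument for the uniform convergence $\psi(t,\cdot)\to\varphi$, which the paper leaves implicit, is a correct small addition.
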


Observe that $\psi(t,x)$ is nothing but the fundamental
solution obtained by plugging $u_0(y)=\delta _0(y)$, the Dirac mass
at the origin, in \eqref{formula-2}, so the first part of the statement
is the analogue of the convergence result recalled in the
introduction,
\begin{equation*}
 \left\| e^{t\partial_{xx}}u_0(x)-\frac{\|u_0\|_{L^1(\R)}}{\sqrt{4\pi
       t}}e^{-x^2/(4t)}\right\|_{L^\infty(\R)}\le
 \frac{C}{t}\|x u_0\|_{L^1(\R)},
\end{equation*}
and the effect of the fitness $f(x)=-x^2$ is to neutralize diffusive
effects.

\subsection{Proofs}

\begin{proof}[Proof of Theorem \ref{th:explicit-sol}]
As proved in \cite{Alf-Car}, we can reduce \eqref{eq-bio} to the
heat equation by combining two changes of unknown function. First, we have
\begin{equation}\label{u-v}
u(t,x)=\frac {v(t,x)}{1-\displaystyle\int _0 ^t\int _\R
x^2v(s,x)\,dxds},
\end{equation}
where $v(t,x)$ solves the Cauchy problem
$$
\partial _t v = \sigma ^2\partial _{xx}v -x^2v, \quad t>0,\; x\in \R;\quad v(0,x)=u_0(x).
$$
Notice that relation \eqref{u-v} is valid as  long as  $\int _0
^t\int _\R x^2v(s,x)\,dxds<1$. Next, by adapting the so-called lens
transform (\cite{Niederer73}, \cite{CaM3AS}), we have
\begin{equation}\label{v-w}
  v(t,x) =\frac
  1{\sqrt{\cosh(2\sigma t)}}e^{-\frac{\t}{2\sigma}x^2}w\left(\frac{\t}{2\sigma},\frac
  x{\sigma\cosh(2\sigma t)}\right),
\end{equation}
where $w(t,x)$ solves the heat equation
\begin{equation*}
   \partial _t w = \partial _{xx}w, \quad t>0,\; x\in \R;\quad w(0,x) =u_0(\sigma x).
\end{equation*}
Combining \eqref{u-v}, \eqref{v-w} and the integral expression of
$w$ via the heat kernel, we end up with
\begin{equation}\label{formula-ugly}
\begin{aligned}
&u(t,x)= \frac{1}{1-I(t)}\times\\
&\sqrt{\frac \sigma{2\pi}}\frac 1{\sqrt{\s}}e^{-\frac
\t{2\sigma}x^2}\displaystyle\int_ \R e^{-\frac \sigma {2\t}
\left(\frac x{\sigma \cosh(2\sigma t)}-y\right)^2}u_0(\sigma y)\,dy,
\end{aligned}
\end{equation}
where
{\small
\begin{equation*}
I(t):=\int _0^t \int _\R \int _ \R x^2 \sqrt{\frac
\sigma{2\pi}}\frac 1{\sqrt{\ss}}e^{-\frac \ts{2\sigma}x^2}
e^{-\frac \sigma {2\ts} \left(\frac x{\sigma
\cs}-y\right)^2}u_0(\sigma y)\,dydxds.
\end{equation*}
}
Let us compute $I(t)$. Using Fubini's theorem, we first compute
the integral with respect to $x$. Using elementary algebra
(canonical form in particular), we get
\begin{align}
&\int _ \R x^2 e^{-\frac \ts{2\sigma}x^2} e^{-\frac \sigma {2\ts}
\left(\frac x{\sigma
\cs}-y\right)^2}\,dx\nonumber\\
&= e^{-\frac{\sigma \ts}2 y^2} \int _\R x^2e^{-\frac 1 {2\sigma \ts}\left(x-\frac {\sigma y }\cs\right)^2}dx \nonumber\\
&
= e^{-\frac{\sigma \ts}2 y^2} \sqrt{ 2 \pi \sigma
\ts}\left(\sigma \ts +\frac{\sigma ^2 y^2}{(\cs) ^2}\right), \label{ici}
\end{align}
where we have used the straightforward formula $\int _\R z^2e^{-\frac a 2
(z-\theta)^2}dz=\frac 1 a \sqrt {\frac {2 \pi}a}+\theta ^2
\sqrt{\frac{2\pi}a}$. Next, we pursue the computation of $I(t)$
and, integrating with respect to $s$, we find
\begin{align*}
&\int _0 ^t \sqrt{\frac \sigma{2\pi}}\frac
1{\sqrt{\ss}}e^{-\frac{\sigma \ts}2 y^2} \sqrt{ 2 \pi \sigma
\ts}\left(\sigma \ts +\frac{\sigma ^2 y^2}{(\cs) ^2}\right)\,ds\\
&=\sigma \int _0 ^t e^{-\frac 1 2 \ln (\cs)}e^{-\frac{\sigma
\ts}2 y^2}\left(\sigma \ts +\frac{\sigma ^2 y^2}{(\cs)
^2}\right)\,ds\\
&=\sigma \int _0 ^t \frac d{ds} \left(-e^{-\frac 1 2 \ln
(\cs)}e^{-\frac{\sigma \ts}2 y^2} \right)\,ds\\
&=\sigma\left(1-\frac 1 {\sqrt {\cosh(2\sigma t)}} e^{-\frac{\sigma \t}2
y^2}\right).
\end{align*}
Finally, we integrate with respect to $y$ and, using $\int_\R  u_0=1$,
get
\begin{align*}
I(t)&=\int _\R \sigma \left(1-\frac 1 {\sqrt {\cosh(2\sigma t)}}
e^{-\frac{\sigma \t}2 y^2}\right)u_0(\sigma y)\,dy\\
&=1-\frac 1{\sqrt {\cosh(2\sigma t)}} \int _\R e^{-\frac{\t}{2\sigma}z^2}
u_0(z)\,dz.
\end{align*}
Plugging this in the denominator of \eqref{formula-ugly}, and
using the change of variable $z=\sigma y$ in the numerator of
\eqref{formula-ugly}, we get \eqref{formula}, from which
\eqref{formula-2} easily follows. 
Using
  \eqref{formula-2}, Fubini theorem and the same computation as in
  \eqref{ici}, we obtain \eqref{u-bar}. 
The
solution thus obtained satisfies $u\in C(\R_+;L^1(\R))\cap L^1_{\rm
  loc}((0,\infty);\mathcal M _2(\R))$.  
Uniqueness for such a solution stems from the transformations that we
have used, which require exactly this regularity (see also
\eqref{eq:uv-para}). Theorem~\ref{th:explicit-sol} 
is proved.
\end{proof}

\begin{proof}[Proof of Proposition \ref{prop:prop-gauss}] We plug
the Gaussian data \eqref{initial-gauss} into formula
\eqref{formula}. Using elementary algebra (canonical form), we
first compute
\begin{align*}
&\int _\R e^{-\frac \t {2\sigma} y^2}u_0(y)\,dy\\
&=\sqrt{\frac
a{2\pi}} e^{-\frac {m^2}2\left(a-\frac {a^2}{\frac \t
\sigma+a}\right)}\int _\R e^{-\frac 1 2 \left(\frac
\t \sigma +a\right)\left(y-\frac{am}{\frac \t \sigma +a}\right)^2}\,dy\\
&= \sqrt{\frac{a\sigma}{\t+a\sigma}}e^{-\frac {m^2}2
\frac{a\t}{\t+a\sigma}}.
\end{align*}
Some tedious but similar manipulations involving canonical form
imply
\begin{align*}
&\int _\R e^{-\frac
1{2\sigma\t} \left(\frac x \cosh(2\sigma t) -y\right)^2} u_0(y)\,dy\\
&=\sqrt{\frac a{2\pi}}e^{-\frac a {(\cosh(2\sigma t))^2(1+a\sigma\t)}\frac
{x^2}2}e^{\frac {am}{\cosh(2\sigma t)(1+a\sigma \t)}x}e^{-\frac a {2(1+a\sigma
\t)} m^2}\\
&  \quad\times  \int _\R e^{-\frac{1+a\sigma \t}{2\sigma
  \t}\left[y-\frac{\sigma \t}{1+a\sigma \t}\left(\frac x{\sigma \t
  \cosh(2\sigma t)}+am\right)\right]^2}\,dy\\ 
&= \sqrt{\frac {a\sigma \t}{1+a\sigma \t}}e^{-\frac a
{(\cosh(2\sigma t))^2(1+a\sigma\t)}\frac {x^2}2}e^{\frac {am}{\cosh(2\sigma t)(1+a\sigma
\t)}x}e^{-\frac a {2(1+a\sigma \t)} m^2}.
\end{align*}
Putting all together into \eqref{formula}, we arrive, after 
computations involving hyperbolic functions, at the desired
formulas \eqref{sol-gauss-cond-ini} and
\eqref{sol-gauss-cond-ini2}.
\end{proof}

\begin{proof}[Proof of Theorem \ref{th:long-time}] Since
$\psi(t,x)$ is nothing but the fundamental solution arising
from \eqref{formula-2} with $u_0(y)=\delta_0(y)$, we write
$$
\psi(t,x)= \frac 1{\sqrt{2\pi\sigma \t}}
 \frac{\displaystyle\int _\R e^{-\frac 1{2\sigma \t} x^2} e^{-\frac \t {2\sigma} y^2}u_0(y)\,dy} {\displaystyle
\int _\R e^{-\frac \t {2\sigma} y^2}u_0(y)\,dy},
$$
so that the deviation from this fundamental solution is given by
\begin{align}
&(u(t,x)-\psi(t,x))\sqrt{2\pi \sigma \t}\nonumber \\
&=\frac{\displaystyle\int _\R \(e^{-\frac 1{2\sigma \t} \left(x -
\frac y \cosh(2\sigma t)\right)^2}-e^{-\frac 1{2\sigma \t} x^2}\) e^{-\frac \t
{2\sigma} y^2}u_0(y)\,dy} {\displaystyle \int _\R e^{-\frac \t
{2\sigma} y^2}u_0(y)\,dy}.\label{diff}
\end{align}
Define $G(z):=e^{-z^2/(2\sigma)}$. It follows from the
mean value theorem that
$$
|u(t,x)-\psi(t,x)|\sqrt{2\pi\sigma \t}\leq \Vert G ' \Vert
_\infty \frac{\displaystyle\int _\R \frac {|y|}{\cosh(2\sigma t) \sqrt {\t}}
e^{-\frac \t {2\sigma} y^2}u_0(y)\,dy}{\displaystyle \int _\R
e^{-\frac \t {2\sigma} y^2}u_0(y)\,dy},
$$
which in turn implies
\begin{align*}
|u(t,x)-\psi(t,x)|&\leq \frac {\Vert G'\Vert _\infty }
{\sqrt{2\pi\sigma}\s}\frac{\displaystyle\int _\R
 e^{-\frac \t {2\sigma}
y^2}|y|u_0(y)\,dy}{\displaystyle \int _\R e^{-\frac \t {2\sigma}
y^2}u_0(y)\,dy}\\
 &\leq \frac {\Vert G'\Vert _\infty
} {\sqrt{2\pi\sigma}\s}\frac{\displaystyle\int _\R
 e^{-\frac {\tanh (2\sigma)} {2\sigma}
y^2}|y|u_0(y)\,dy}{\displaystyle \int _\R e^{-\frac 1 {2\sigma}
y^2}u_0(y)\,dy}=: \frac C {\s},
\end{align*}
for all $t\geq 1$. This proves \eqref{deviation}.
\end{proof}

\section{The case $f(x)=x^2$: systematic extinction in finite time 
}\label{s:plusquadratic}

The case $f(x)=x^2$ can be handled as explained in
\cite{Alf-Car}. Details are presented below: for any initial datum, the
solution becomes extinct in finite time. 

%The situation is worse than in the $f(x)=x$ case since, for any initial data, there is extinction in finite time. What happens is that, both the right and left tails quickly enlarge, so that, in order to conserve the mass, the middle part is quickly decreasing. Then the nonlocal term $\int_\R x^2u(t,x)dx$ becomes infinite very quickly for tails which are not very light (as in the $f(x)=x$ case) but also for Gaussian tails (worse than in the $f(x)=x$ case). Hence the equation becomes meaningless at a time $<\frac{\pi}{4\sigma}$ depending on the tail.

Indeed, it will turn out that there are two limitations for the time
interval of existence of the solution. The first limitation arises
when reducing equation \eqref{eqv-bis} to the heat equation
\eqref{eqw-bis} through the relation \eqref{v-w-bis}, which requires 
$$
0<t<T^{\rm Heat}:=\frac{\pi}{4\sigma}.
$$
The other limitation appears when reducing \eqref{eq-bio} to
\eqref{eqv-bis}, which requires 
$$\int_\R
e^{\frac{\tbis}{2\sigma}y^{2}}u_0(y)dy$$
to remain finite (otherwise
the solution becomes extinct). Hence, for $u_0\ge 0$  with $\int _\R
u_0=1$, we define 
\begin{equation}\label{def:T}
  T: = \sup\left\{ 0\leq t <T^{\rm Heat},\quad \int_\R  e^{\frac{\tbis}{2\sigma}y^{2}}u_0(y)dy<\infty\right\}\in [0,T^{\rm Heat}].
\end{equation}
Some typical situations are the following: if $u_0(x)$ has algebraic
or exponential tails then $T=0$ (immediate extinction); if $u_0(x)$
has Gaussian tails then $0<T<T^{\rm Heat}$ (rapid extinction in finite
time);  last, if $u_0(x)$ is compactly supported or has \lq\lq very
very light tails" then $T=T^{\rm Heat}$ (extinction in finite time).  

%For very very light tails (like $e^{-x^{4}}$ or compactly supported...), it turns out that there is  a universal extinction time which is $\frac{\pi}{4\sigma}$. To understand this, notice the presence of coefficients like $\sqrt{\tbis}$ which are defined for all $0<t<\frac{\pi}{4\sigma}$, which {\it a priori} bounds the time interval of existence of solutions.

\subsection{Results}

\begin{theo}[The solution explicitly]
\label{th:explicit-sol-bis} Let $u_0\ge 0$, with $\int _\R u_0=1$.  As long
as $\overline f (t)$ is finite, the solution of \eqref{eq-bio} with
initial data $u_0$ is given by
\begin{align}
 u(t,x)&=\frac 1{\sqrt{2\pi\sigma \tbis}}
 \frac{e^{\frac \tbis {2\sigma} x^2}
\displaystyle\int _\R e^{-\frac 1{2\sigma\tbis} \left(\frac x \cbis
-y\right)^2} u_0(y)\,dy} {\displaystyle \int _\R
e^{\frac \tbis {2\sigma} y^2}u_0(y)\,dy}\label{formula-bis}\\
&= \frac 1{\sqrt{2\pi\sigma \tbis}}
 \frac{
\displaystyle\int _\R e^{-\frac 1{2\sigma \tbis} \left(x - \frac y
\cbis\right)^2} e^{\frac \tbis {2\sigma} y^2}u_0(y)\,dy} {\displaystyle
\int _\R e^{\frac \tbis {2\sigma} y^2}u_0(y)\,dy}\,.\label{formula-2-bis}
\end{align}
As long as it exists, $\overline f (t)$ is given by
\begin{equation}\label{u-bar-bis}
\overline f (t)=\sigma \tbis+\frac{1}{(\cos (2\sigma t))^{2}}\frac{\displaystyle \int _\R
e^{\frac{\tbis}{2\sigma}y^{2}} y^{2}\, u_0(y)\,dy}{\displaystyle \int _\R e^{\frac{\tbis}{2\sigma}y^{2}} u_0(y)\,dy}.
\end{equation}
\end{theo}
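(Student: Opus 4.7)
The plan is to follow the same three-step reduction as in the proof of Theorem~\ref{th:explicit-sol}, tracking the sign change induced by $-x^2\to +x^2$. First, using the general transformation \eqref{eq:uv-para}, I rewrite \eqref{eq-bio} as
\begin{equation*}
u(t,x)=\frac{v(t,x)}{1+I(t)},\qquad I(t):=\int_0^t\!\!\int_\R x^2\, v(s,x)\,dx\,ds,
\end{equation*}
where $v$ solves the linear problem $\partial_t v = \sigma^2 \partial_{xx} v + x^2 v$ with $v(0,\cdot)=u_0$. This relation is valid as long as the denominator is positive, equivalently as long as $\overline f(t)$ is finite.

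Next, I apply the generalized lens transform for the inverted harmonic oscillator, namely the formal analytic continuation $\sigma\mapsto i\sigma$ of \eqref{v-w},
\begin{equation*}
v(t,x)=\frac{1}{\sqrt{\cbis}}\,e^{\frac{\tbis}{2\sigma}x^2}\, w\!\left(\frac{\tbis}{2\sigma},\frac{x}{\sigma\cbis}\right),
\end{equation*}
where $w$ solves the standard heat equation with initial datum $u_0(\sigma\cdot)$. This transform is well defined precisely on $(0,T^{\rm Heat})=(0,\pi/(4\sigma))$, which is where the first time restriction enters. Substituting the heat-kernel representation of $w$ produces a Gaussian integral formula for $v$, hence a formula for $u$ modulo the as yet unevaluated denominator $1+I(t)$.

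The algebraic core is then to evaluate $I(t)$ by Fubini, exactly as in \eqref{ici} but with circular instead of hyperbolic functions. The crucial sign check is that the coefficient of $x^2$ in the combined exponent equals
\begin{equation*}
\frac{\tbis}{2\sigma}-\frac{1}{2\sigma\tbis\cbis^2}=-\frac{1}{2\sigma\tbis},
\end{equation*}
thanks to the identity $\tan^2(2\sigma s)\cos^2(2\sigma s)-1=-\cos^2(2\sigma s)$; this guarantees convergence of the $x$-Gaussian integral. After completing the square, the residual $y$-dependent factor in the exponent is $+\sigma\tsbis y^2/2$ (opposite sign from the $-x^2$ case, coming from $1/\cos^2-1=\tan^2$). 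The $s$-integration then collapses because the resulting expression is a total derivative in $s$, and the $y$-integration, combined with $\int u_0=1$, yields
\begin{equation*}
1+I(t)=\frac{1}{\sqrt{\cbis}}\int_\R e^{\frac{\tbis}{2\sigma}y^2}u_0(y)\,dy.
\end{equation*}
Dividing by this denominator produces \eqref{formula-bis}, and \eqref{formula-2-bis} follows by rearrangement of the Gaussian exponents. Finally, \eqref{u-bar-bis} is obtained by multiplying \eqref{formula-2-bis} by $x^2$, applying Fubini, and using the standard Gaussian moment identity $\int z^2 e^{-a(z-\theta)^2/2}\,dz=(1/a+\theta^2)\sqrt{2\pi/a}$, exactly as at the end of the proof of Theorem~\ref{th:explicit-sol}.

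The main obstacle is not the algebra, which is nearly identical to the hyperbolic case, but the book-keeping of domains of validity. Because the weight $e^{\tbis y^2/(2\sigma)}$ appearing in the denominator now \emph{grows} in $y$, each integral above is meaningful only so long as $\int_\R e^{\tbis y^2/(2\sigma)}u_0(y)\,dy<\infty$, which is precisely the cut-off defining $T$ in \eqref{def:T}. One must therefore justify every use of Fubini and of the two transformations on the restricted interval $(0,T)$ rather than on $\R_+$, and verify that the denominator $1+I(t)$ stays positive there. This contrast with the $-x^2$ case already anticipates the extinction phenomenon of Theorem~\ref{theo:2}.
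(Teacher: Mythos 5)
Your proposal is correct and follows essentially the same route as the paper: the reduction $u=v/(1+I(t))$ with $v$ solving the linear inverted-oscillator equation, the lens transform to the heat equation valid on $(0,\pi/(4\sigma))$, the Fubini evaluation of $I(t)$ as a total derivative in $s$ yielding $1+I(t)=(\cos(2\sigma t))^{-1/2}\int_\R e^{\tan(2\sigma t)y^2/(2\sigma)}u_0(y)\,dy$, and the Gaussian second-moment identity for \eqref{u-bar-bis}. Your sign checks (the coefficient of $x^2$ collapsing to $-1/(2\sigma\tan(2\sigma s))$ and the residual $+\sigma\tan(2\sigma s)y^2/2$) match the paper's computation exactly; the only cosmetic slip is describing the validity condition for $u=v/(1+I)$ as positivity of the denominator, whereas since $I(t)\ge 0$ the real issue is its finiteness, which is indeed the condition you then correctly identify with finiteness of $\overline f$.
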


\begin{rem}
 Formally, one can notice that $-x^2$ is turned into $+x^2$ in
\eqref{eq-bio} if one changes $\sigma$ to $i\sigma$, and $t$ to $-t$. After
such transforms, \eqref{formula}-\eqref{formula-2} becomes \eqref{formula-bis}-\eqref{formula-2-bis}. 
\end{rem}

\begin{prop}[Propagation of Gaussian initial
data]\label{prop:prop-gauss-bis} If
\begin{equation}\label{initial-gauss-bis}
u_0(x)=\sqrt{\frac a{2\pi}}e^{-\frac a 2(x-m)^2},\quad a>0, \quad
m\in \R,
\end{equation}
then the solution of \eqref{eq-bio} remains  Gaussian for 
$$
0<t<T=\frac{\arctan (a\sigma)}{2\sigma},
$$
and is given by
\begin{equation}\label{sol-gauss-cond-ini-bis}
u(t,x)=\sqrt{\frac {a(t)}{2\pi}}e^{-\frac{a(t)}2(x-m(t))^2},
\end{equation} where
\begin{equation}\label{sol-gauss-cond-ini2-bis}
a(t):=\frac {a\sigma-\tbis}{\sigma(1+a\sigma\tbis)}, \quad
m(t):=\frac{ma\sigma}{a\sigma\cbis-\sbis}.
\end{equation}
\end{prop}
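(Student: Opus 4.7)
The strategy is to mimic exactly the proof of Proposition \ref{prop:prop-gauss}, this time substituting the Gaussian datum \eqref{initial-gauss-bis} into the explicit formula \eqref{formula-bis}. Alternatively, one may invoke the formal correspondence $\sigma \mapsto i\sigma$, $t \mapsto -t$ noted in the Remark following Theorem \ref{th:explicit-sol-bis}, which sends $\t \mapsto -\tbis$, $\s \mapsto -\sbis$ and $\cosh(2\sigma t) \mapsto \cbis$, and so transforms \eqref{sol-gauss-cond-ini2} directly into \eqref{sol-gauss-cond-ini2-bis}.

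I would first determine the range of validity. For the Gaussian $u_0$ in \eqref{initial-gauss-bis}, the denominator $\int_\R e^{\tbis y^2/(2\sigma)} u_0(y)\,dy$ is finite if and only if $a-\tbis/\sigma>0$, i.e., $\tbis<a\sigma$, which is precisely $0<t<\arctan(a\sigma)/(2\sigma)=T$. On this interval, I would compute both Gaussian integrals
\begin{equation*}
  \int_\R e^{\frac{\tbis}{2\sigma} y^2} u_0(y)\,dy,\qquad
  \int_\R e^{-\frac{1}{2\sigma \tbis}\left(\frac{x}{\cbis}-y\right)^2} u_0(y)\,dy,
\end{equation*}
by completing the square in $y$. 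The first yields an expression of the form $\sqrt{a\sigma/(a\sigma-\tbis)}\,e^{\alpha(t) m^2}$ for some explicit $\alpha(t)$; the second produces an expression of the form $C(t)\,e^{-A(t) x^2/2+B(t) m x+D(t) m^2}$ with coefficients built from $\tbis$ and $\cbis$. Substituting both into \eqref{formula-bis} and regrouping the quadratic and linear terms in $x$ produces a Gaussian in $x$, whose variance and mean can then be read off.

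The main obstacle is the purely algebraic bookkeeping: matching the raw output of the Gaussian computations with the compact forms of $a(t)$ and $m(t)$ in \eqref{sol-gauss-cond-ini2-bis} requires careful simplification, notably using $\cbis^2+\sbis^2=1$ and $\sbis/\cbis=\tbis$. The denominator $a\sigma\cbis-\sbis$ in $m(t)$ is not apparent until one combines the linear-in-$x$ term from the numerator with the prefactor $e^{\tbis x^2/(2\sigma)}$ of \eqref{formula-bis} and divides by the resulting quadratic coefficient. A useful sanity check is that $a(t)\to 0$ as $t\to T^-$, so the Gaussian flattens to zero there, in line with the extinction phenomenon of Theorem \ref{theo:2}.
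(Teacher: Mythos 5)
Your proposal follows exactly the paper's route: the authors' own proof simply says that one plugs the Gaussian datum \eqref{initial-gauss-bis} into \eqref{formula-bis} and completes squares as in Proposition \ref{prop:prop-gauss}, omitting the details, and you correctly identify both this computation and the origin of the restriction $\tbis<a\sigma$, i.e.\ $t<\arctan(a\sigma)/(2\sigma)$. The only cosmetic point is that in the formal correspondence $\sigma\mapsto i\sigma$, $t\mapsto -t$ one has $\t\mapsto -i\tbis$ rather than $-\tbis$ (the factors of $i$ cancel in the final formulas), but this does not affect your main, direct argument.
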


Notice that $T<T^{\rm Heat}=\frac{\pi}{4\sigma}$. Since $a(t)\searrow 0$  as $t\nearrow T$,
it is easily seen that $u(t,x)\to 0$ uniformly in $x\in \R$. This extinction in finite time is actually true for {\it all} initial data, as stated
in the following theorem.

\begin{theo}[Extinction in finite time]\label{th:global-bis}
Let $u_0\ge 0$, with $\int _\R u_0=1$. Let $T$ be given by \eqref{def:T}.
\begin{itemize}
 \item [$(i)$]  If $T=T^{\rm Heat}$, then in \eqref{eq-bio},
 both $u(t,x)$ and $\overline  f(t)$ exist on $[0,T^{\rm Heat})$. Typically, $u \in
L^\infty_{\rm loc}((0,T^{\rm Heat})\times \R)$, $\overline f \in L^\infty_{\rm
loc}(0,T^{\rm Heat})$, and $\int_\R u(t,x)dx=1$ for all $0\leq t<T^{\rm Heat}$. Moreover, extinction at time $T^{\rm Heat}$ occurs, that is
\begin{equation*}
  u(t,x)=0,\quad \forall t> T^{\rm Heat}, \ \forall x\in \R.
\end{equation*}
\item [$(ii)$] 
 If $0<T< T^{\rm Heat}$, then extinction in finite time  occurs:
\begin{equation*}
  u(t,x)=0,\quad \forall t> T, \ \forall x\in \R.
\end{equation*}
\item [$(iii)$] If $T=0$, then $u(t,x)$ is defined for no $t>0$.
\end{itemize}
\end{theo}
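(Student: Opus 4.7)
The plan is to read off the theorem directly from the explicit formula \eqref{formula-bis} of Theorem~\ref{th:explicit-sol-bis}. That formula has two intrinsic obstructions: the underlying lens transform requires $\cbis > 0$, i.e.\ $t < T^{\rm Heat} = \pi/(4\sigma)$, and the denominator $\int_\R e^{\tbis y^2/(2\sigma)} u_0(y)\,dy$ must be finite. Since $t \mapsto \tbis$ is strictly increasing on $[0, T^{\rm Heat})$, the second condition is preserved when $t$ decreases, so the admissible range of the formula is precisely $[0, T)$ with $T$ as in \eqref{def:T}. In short, $T$ is the natural extinction time, and the whole proof will reduce to checking that \eqref{formula-bis} indeed yields a valid solution on $[0, T)$ and forces $u \equiv 0$ beyond.

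In Case $(i)$, where $T = T^{\rm Heat}$, I would invoke \eqref{formula-bis} to define $u$ on $[0, T^{\rm Heat}) \times \R$. A direct Gaussian integration against the numerator, identical in structure to \eqref{ici}, gives $\int_\R u(t,x)\,dx = 1$ for every $t \in [0, T^{\rm Heat})$; the manifest smoothness of \eqref{formula-bis} in $(t,x)$ yields the $L^\infty_{\rm loc}$ regularity; and \eqref{u-bar-bis} supplies $\overline f \in L^\infty_{\rm loc}(0, T^{\rm Heat})$. For $t > T^{\rm Heat}$, I would then argue that the only solution is $u \equiv 0$: any admissible continuation is still representable by \eqref{formula-bis}, which degenerates irrecoverably at $t = T^{\rm Heat}$ (where $\cbis \to 0$ and $\tbis \to +\infty$).

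For Cases $(ii)$ and $(iii)$, the same arguments produce a unit-mass solution on $[0, T)$, with extinction now triggered by failure of the integrability condition rather than the lens transform. To locate the $u \equiv 0$ regime for $t > T$, I would monitor \eqref{u-bar-bis}: the divergence of its denominator as $t \nearrow T$ drives $\overline f(t) \to +\infty$, which degenerates the formal mass ODE $m'(t) = \overline f(t)(1-m(t))$ and, via \eqref{formula-bis}, makes $u(t, \cdot) \to 0$ pointwise (exactly as one sees in Proposition~\ref{prop:prop-gauss-bis}, where $a(t) \searrow 0$ as $t \nearrow T$). Case $(iii)$ is then the degenerate sub-case $T = 0$: the integrability condition already fails on every right-neighbourhood of $0$, so the formula produces no solution at all.

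The hard part will be making the non-continuation argument for $t > T$ fully rigorous: I must rule out hypothetical weak solutions not captured by \eqref{formula-bis} and pin down the sense in which $u$ vanishes across $t = T$. Both points rely on the rigidity of the transformation chain (gauge transform \eqref{eq:uv-para}, lens transform, heat kernel), which determines $u$ uniquely on its maximal interval of existence, so anything admissible must coincide with \eqref{formula-bis} there. A secondary technical issue is taking limits in \eqref{formula-bis} as $t \nearrow T$, where numerator and denominator both diverge; a monotone convergence argument on the ratio, exploiting that the Gaussian factor $e^{-(x - y/\cbis)^2/(2\sigma \tbis)}$ damps exactly the tails responsible for the divergence of the denominator, should confirm the transition from $\int u = 1$ to $u \equiv 0$.
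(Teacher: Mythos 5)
Your overall strategy --- reading the whole theorem off the explicit formulas of Theorem~\ref{th:explicit-sol-bis} --- is exactly the paper's, and your treatment of case $(i)$ on $[0,T^{\rm Heat})$ (unit mass by Gaussian integration, local boundedness, $\overline f$ from \eqref{u-bar-bis}) matches what is done there. The problems are in how you propose to extract the extinction statements themselves. For cases $(ii)$--$(iii)$ you argue through $\overline f$, claiming that ``the divergence of the denominator of \eqref{u-bar-bis} drives $\overline f(t)\to+\infty$''; that is backwards (a divergent denominator pushes the ratio down, not up --- when $\overline f$ blows up it is because the numerator outruns the denominator), and it is in any case a detour. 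You also set up an $\infty/\infty$ limit in \eqref{formula-bis} as $t\nearrow T$ to be resolved by monotone convergence. None of this is needed: for $T<t<T^{\rm Heat}$ the numerator of \eqref{formula-bis} is at most $e^{\tbis x^2/(2\sigma)}\int_\R u_0(y)\,dy=e^{\tbis x^2/(2\sigma)}$, since the Gaussian factor is bounded by $1$, while the denominator $\int_\R e^{\tbis y^2/(2\sigma)}u_0(y)\,dy$ is already $+\infty$ by the very definition \eqref{def:T} of $T$; hence $u(t,x)=0$ pointwise for every $t>T$, with no limit to take. In case $(i)$ the paper likewise just bounds $0\le u(t,x)\le 1/\sqrt{2\pi\sigma\tbis}$ from \eqref{formula-2-bis} and lets $\tbis\to+\infty$ as $t\nearrow T^{\rm Heat}$.

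The second genuine gap is case $(iii)$: it is not enough to say ``the formula produces no solution at all''. The claim is that \emph{no} solution exists for any $t>0$, and the paper proves it by contradiction: if $\overline f$ were finite on some $[0,\tau]$, the representation \eqref{formula-bis} would be valid there and, since $T=0$, would force $u(t,\cdot)\equiv 0$ for $t\in(0,\tau]$; but finiteness of $\overline f$ also yields $\int_\R u(t,x)\,dx=1$ on that interval, a contradiction. You correctly identify that one must rule out solutions not captured by the formula (that is exactly where the rigidity of the transformation chain \eqref{eq:uv-para}--lens transform enters), but your sketch never closes the loop with the mass-conservation contradiction, which is the actual content of $(iii)$.
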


\subsection{Proofs}

\begin{proof}[Proof of Theorem \ref{th:explicit-sol-bis}]
Like in the previous section, we can reduce \eqref{eq-bio} to the
heat equation by combining two changes of unknown function. First, we have
\begin{equation}\label{u-v-bis}
u(t,x)=\frac {v(t,x)}{1+\displaystyle\int _0 ^t\int _\R
x^2v(s,x)\,dxds},
\end{equation}
where $v(t,x)$ solves the Cauchy problem
\begin{equation}\label{eqv-bis}
\partial _t v = \sigma ^2\partial _{xx}v +x^2v, \quad t>0,\; x\in \R;\quad v(0,x)=u_0(x).
\end{equation}
Notice that relation \eqref{u-v-bis} is valid as  long as  $\overline f (t)$ remains finite. Next, we have
\begin{equation}\label{v-w-bis}
  v(t,x) =\frac
  1{\sqrt{\cbis}}e^{\frac{\tbis}{2\sigma}x^2}w\left(\frac{\tbis}{2\sigma},\frac
  x{\sigma\cbis}\right),
\end{equation}
where $w(t,x)$ solves the heat equation
\begin{equation}\label{eqw-bis}
   \partial _t w = \partial _{xx}w, \quad t>0,\; x\in \R;\quad w(0,x) =u_0(\sigma x).
\end{equation}
Notice that relation \eqref{v-w-bis} is valid for
$0<t<T^{\rm Heat}=\frac{\pi}{4\sigma}$.
Combining \eqref{u-v-bis}, \eqref{v-w-bis} and the integral expression of
$w$ via the heat kernel, we end up with
\begin{equation}\label{formula-ugly-bis}
u(t,x)= \frac{\sqrt{\frac \sigma{2\pi}}\frac 1{\sqrt{\sbis}}e^{\frac
\tbis{2\sigma}x^2}\displaystyle\int_ \R e^{-\frac \sigma {2\tbis}
\left(\frac x{\sigma \cbis}-y\right)^2}u_0(\sigma y)\,dy}{1+I(t) },
\end{equation}
where
$$
I(t):=\int _0^t \int _\R \int _ \R x^2 \sqrt{\frac
\sigma{2\pi}}\frac 1{\sqrt{\ssbis}}e^{\frac \tsbis{2\sigma}x^2}
e^{-\frac \sigma {2\tsbis} \left(\frac x{\sigma
\csbis}-y\right)^2}u_0(\sigma y)\,dydxds.
$$
Let us compute $I(t)$. Using Fubini's theorem, we first compute
the integral with respect to $x$. Using elementary algebra
(canonical form in particular), we get
\begin{align}
&\int _ \R x^2 e^{\frac \tsbis{2\sigma}x^2} e^{-\frac \sigma {2\tsbis}
\left(\frac x{\sigma
\csbis}-y\right)^2}\,dx\nonumber \\
&= e^{\frac{\sigma \tsbis}2 y^2} \int _\R x^2e^{-\frac 1 {2\sigma \tsbis}\left(x-\frac {\sigma y }\csbis\right)^2}dx\nonumber  \\
&= e^{\frac{\sigma \tsbis}2 y^2} \sqrt{ 2 \pi \sigma
\tsbis}\left(\sigma \tsbis +\frac{\sigma ^2 y^2}{(\csbis) ^2}\right)\label{same}
\end{align}
where we have used $\int _\R z^2e^{-\frac a 2
(z-\theta)^2}dz=\frac 1 a \sqrt {\frac {2 \pi}a}+\theta ^2
\sqrt{\frac{2\pi}a}$. Next, we pursue the computation of $I(t)$
and, integrating with respect to $s$, we find
\begin{align*}
&\int _0 ^t \sqrt{\frac \sigma{2\pi}}\frac
1{\sqrt{\ssbis}}e^{\frac{\sigma \tsbis}2 y^2} \sqrt{ 2 \pi \sigma
\tsbis}\left(\sigma \tsbis +\frac{\sigma ^2 y^2}{(\csbis) ^2}\right)\,ds\\
&=\sigma \int _0 ^t e^{-\frac 1 2 \ln (\csbis)}e^{\frac{\sigma
\tsbis}2 y^2}\left(\sigma \tsbis +\frac{\sigma ^2 y^2}{(\csbis)
^2}\right)\,ds\\
&=\sigma \int _0 ^t \frac d{ds} \left( e^{-\frac 1 2 \ln
(\csbis)}e^{\frac{\sigma \tsbis}2 y^2} \right)\,ds\\
&=\sigma\left(\frac 1 {\sqrt {\cbis}} e^{\frac{\sigma \tbis}2
y^2}-1\right).
\end{align*}
Finally, we integrate with respect to $y$ and, using $\int _\R u_0=1$,
get
\begin{align*}
I(t)&=\int _\R \sigma \left(\frac 1 {\sqrt {\cbis}}
e^{\frac{\sigma \tbis}2 y^2}-1\right)u_0(\sigma y)\,dy\\
&=\frac 1{\sqrt {\cbis}} \int _\R e^{\frac{\tbis}{2\sigma}z^2}
u_0(z)\,dz-1.
\end{align*}
Plugging this in the denominator of \eqref{formula-ugly-bis}, and
using the change of variable $z=\sigma y$ in the numerator of
\eqref{formula-ugly-bis}, we get \eqref{formula-bis}, from which
\eqref{formula-2-bis} easily follows.  Using \eqref{formula-2-bis},
Fubini theorem and the same computation as in \eqref{same}, we obtain
\eqref{u-bar-bis}.  Theorem~\ref{th:explicit-sol-bis} is proved.
\end{proof}

\begin{proof}[Proof of Proposition \ref{prop:prop-gauss-bis}] The
  proof is rather similar to that of Proposition
  \ref{prop:prop-gauss}. It consists in plugging 
the Gaussian data \eqref{initial-gauss-bis} into formula 
\eqref{formula-bis} and using elementary algebra (canonical
form). Details are omitted. 
\end{proof}

\begin{proof}[Proof of Theorem \ref{th:global-bis}] 
Let us assume $T=T^{\rm Heat}$ and prove  $(i)$. Since 
$$\int_\R
e^{\frac{\tbis}{2\sigma}y^{2}}u_0(y)dy <\infty \text{ for all }
0<t<T^{\rm Heat},$$
we have $\int _\R e^{\frac{\tbis}{2\sigma}y^{2}} y^{2}u_0(y)\,dy
<\infty$ for all $0<t<T^{\rm Heat}$, and therefore both
\eqref{u-bar-bis} and 
\eqref{formula-2-bis} are meaningful for all $0<t<T^{\rm Heat}$. It
follows from \eqref{formula-2-bis} that 
$$
0\leq u(t,x)\leq 
 \frac{1}{\sqrt{2\pi\sigma \tbis}},
$$
and the right hand side goes to zero as $t\nearrow T^{\rm Heat}$.

Let us  assume $0<T< T^{\rm Heat}$ and prove $(ii)$. It follows from \eqref{formula-bis} that
$$
0\leq u(t,x)\leq 
 \frac{e^{\frac \tbis {2\sigma} x^2}}
{\sqrt{2\pi\sigma \tbis}\displaystyle \int _\R
e^{\frac \tbis {2\sigma} y^2}u_0(y)\,dy},
$$
and, the right hand side goes to zero as $t\nearrow T<T^{\rm Heat}$.

Finally,  assume $T=0$ and prove $(iii)$. Supposing by contradiction
that there 
  is a $\tau>0$ such that $\overline f$ is finite on $[0,\tau]$, then
  \eqref{formula-bis} would hold true. On the other hand, the
  assumption $T=0$, along with \eqref{formula-bis}, would imply
  $u(t,x)=0$ for all $t\in (0,\tau]$ and all $x\in \R$, while we know that so long as $\overline f$ is finite, we have
  $\int_\R u(t,x)dx=1$, hence a contradiction.
\end{proof}

\bibliographystyle{amsplain}    %ordre alphab�tique, noms en majuscule
\bibliography{biblio}

\end{document}